\newtheorem{lem}{Lemma}%[section]
\newtheorem{rem}{Remark}
\newtheorem{theo}{Theorem}%[section]
\title[The Bergman kernel of a certain Hartogs domain]{\ The Bergman kernel of a certain Hartogs domain and the polylogarithm function}
\author{Atsushi Yamamori}
\address{Graduate School of Mathematics, Nagoya University, Furo-Cho, Chikusa-Ku, Nagoya 464-8602, Japan}
\begin{document}
\subjclass[2000]{ 32A25}
\keywords{Bergman kernel, weighted Bergman kernel, Fock-Bargmann space, 
polylogarithm function, Lu Qi-Keng problem, Forelli-Rudin
construction}
\email{d08006u@math.nagoya-u.ac.jp}
\begin{abstract}
We consider a certain Hartogs domain which is related to the Fock-Bargmann space.
We give an explicit formula for the Bergman kernel of the domain in terms 
of the polylogarithm functions.
Moreover we solve the Lu Qi-Keng problem of the domain in some cases.
\end{abstract}
\maketitle
\section{Introduction}
In this paper we consider a Hartogs domain in $\mathbb{C}^{n+m}$ defined by the inequality
$||\zeta||^2<e^{-\mu||z||^2  }, $
where $(z,\zeta)\in\Bbb{C}^n\times\Bbb{C}^m$ and $\mu>0$.
Our aim is to show that the Bergman kernel of this domain can be written explicitly
in terms of the polylogarithm functions.\par
The polylogarithm function appears in many different areas of mathematics.
For example it appears in analysis of the Riemann zeta function, algebraic
geometry and
mathematical physics (cf. \cite{d2010}, \cite{Hirzebruch2008}). 
The polylogarithm function is a rational function under a certain condition.
Our formula is expressed in terms of these rational
cases of the polylogarithm functions and their derivatives.\par
It is usually hard to obtain an explicit formula of the Bergman kernel
of a complex domain.
Only few domains with an explicit Bergman kernel are known until now.
In this situation, it is fundamental and important to find a domain with
explicit Bergman kernel.\par
There are two basic approaches for obtaining explicit Bergman kernels.
One is to construct a complete orthonormal basis of the Bergman space explicitly. 
For the unit disk, one can find a complete orthonormal basis
$\{\pi^{-1/2}(n+1)^{1/2}z^n \}_{n=1}^\infty$.
However this approach faces difficulty in general. If the domain does not have symmetry,
a computation of integral on the domain is unexecutable or extremely difficult.\par
If the automorphism group of a domain contains enough information (e.g.
transitivity) to obtain explicit formula,
then we can use it. 
For example, the Lie group $SU(n,m)$
acts transitively on the classical domain $D=\{z\in M_{n,m}(\mathbb{C})
;I-\overline{z}{^t}z>0 \}$ of type I by linear fractional transformation.
It is known that the Bergman kernel $K$ of a classical domain has the property
that
$K(z,0)$ is a non-zero constant.
These facts and a transformation rule of the Bergman kernel (cf.
\cite{Boas2000}) imply that the computation of the Bergman kernel is reduced to the computation
of the Jacobian of the linear fractional map.
L. K. Hua \cite{Hua1964} computed the Bergman kernels for the classical domains in
this way.
As above, this approach faces difficulty in general.\par
The approach in this article is different from the above two.
Our method is based on Ligocka's theorem \cite{Ligocka1989} which relates the Bergman kernel of a Hartogs domain to
weighted Bergman kernels of the base domain.
Thanks to this theorem one can find that  our domain and the Fock-Bargmann space
are closely related.
We will see that Ligocka's theorem and an explicit formula of the Fock-Bargmann kernel
lead to an explicit formula of the Bergman kernel of our domain.\par
As an application of our formula, we solve the Lu Qi-Keng problem for our domain in some cases.
The Lu Qi-Keng problem asks whether the Bergman kernel has zeros or not.
This problem was investigated for various domains by many authors in this decade.
Yin \cite{WeiPing1999} obtained explicit form of the Bergman kernel of the Cartan-Hartogs domain.
The Lu Qi-Keng problem for the Cartan-Hartogs domain was studied by several authors (cf. \cite{Demmand2009}).
In \cite{Wang2009}, the authors obtained an explicit formula of the Bergman kernel of some Hartogs domains and solved the Lu Qi-Keng problem
for the domains in some cases. Recently Lu Qi-Keng himself studied the location
of the zeros of Bergman kernel in \cite{Lu2009}.
Further information about the Lu Qi-Keng problem can be found in
\cite{Boas2000},\cite{Boas1999},\cite{Jarnicki2005} and \cite{WeiPing2008}.

\section{Preliminaries}
Let $\Omega$ be a domain in $\Bbb{C}^n$, $L^2_a(\Omega)$ the Hilbert space of square integrable holomorphic
functions on $\Omega$
with the inner product:
$$ \langle f,g \rangle=\int _{\Omega} f(z)\overline{g(z)} dz,\mbox{\quad
for all $f,g\in L^2_a(\Omega)$. }$$
The Bergman kernel $K(z,w)=\overline{K_z(w)}$ is the reproducing kernel
for $L_a^2(\Omega)$,
i.e. if $f\in L_a^2(\Omega)$ then
$$f(z)= \langle f,K_z \rangle=\int _{\Omega} f(w)K(z,w) dw, \mbox{\quad for
all $z\in\Omega$. }$$
Let $\{\phi_k\}$ be a complete orthonormal basis of $L_a^2(\Omega)$. Then the Bergman kernel
can be also defined by
$$ K(z,w)=\sum_k\phi_k(z)\overline{\phi_k(w)}. $$
Let $p$ be a positive continuous function on $\Omega$ and $L^2_a(\Omega,p)$
the Hilbert space of square integrable holomorphic
functions with respect to the weight function $p$ on $\Omega$ with the inner
product 
$$ \langle f,g \rangle=\int _{\Omega} f(z)\overline{g(z)} p(z) dz,\mbox{\quad
for all $f,g\in L^2_a(\Omega)$. }$$
The weighted Bergman kernel $K_{\Omega, p}$ of $\Omega$ with respect to the
weight $p$ is the
reproducing kernel of $L^2_a(\Omega,p)$.\par
We define the Hartogs domain $\Omega_{m,p}$ by
\begin{align*}
 \Omega_{m,p}:=  \{ (z,\zeta)\in\Omega\times\Bbb{C}^m; ||\zeta||^2 <
p(z)\}.
\end{align*}
E. Ligocka \cite[Proposition 0]{Ligocka1989} showed that
the Bergman kernel of $\Omega_{m,p}$ is expressed as infinite sum of
weighted Bergman kernels of the base domain $\Omega$.
\begin{theo}
Let $K_m$ be the Bergman kernel of $\Omega_ {m ,p }$ and $K_{\Omega,p^k}$
the
weighted Bergman kernel of
$\Omega$ with respect to the weight function $p^k$. Then
\begin{align*}
 K_m((z,\zeta),(z',\zeta') ) =\dfrac{m!}{\pi^m}\sum_{k=0}^\infty
\dfrac{ (m+1)_k}{k!}
K_{\Omega,p^{k+m}}(z,z')\langle\zeta,\zeta'\rangle^k.
\end{align*}
Here $(a)_k$ denotes the Pochhammer symbol
$(a)_k =a(a+1)\cdots (a+k-1).  $
\end{theo}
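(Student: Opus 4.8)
The plan is to exploit the complete Reinhardt structure of $\Omega_{m,p}$ in the $\zeta$-variable: I expand every element of $L^2_a(\Omega_{m,p})$ into a power series in $\zeta$ whose coefficients are (weighted) holomorphic functions on $\Omega$, assemble an explicit orthonormal basis from these, and then read off the kernel from the formula $K=\sum_k\phi_k\overline{\phi_k}$. First I would note that, $\Omega_{m,p}$ being a Hartogs domain, any $f\in L^2_a(\Omega_{m,p})$ admits a convergent expansion $f(z,\zeta)=\sum_{\alpha}f_\alpha(z)\zeta^\alpha$, the sum running over multi-indices $\alpha\in\mathbb{N}^m$, with each $f_\alpha$ holomorphic on $\Omega$. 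The decisive structural fact is that this expansion is orthogonal in each fiber: for fixed $z$ the monomials $\zeta^\alpha$ are mutually orthogonal on the ball $\{\|\zeta\|^2<p(z)\}$, so by Fubini the squared norm of $f$ decouples as a sum over $\alpha$.

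The central computation is the fiber integral. Scaling the unit-ball identity $\int_{\|\eta\|<1}|\eta^\alpha|^2\,dV(\eta)=\pi^m\alpha!/(m+|\alpha|)!$ by the radius $\sqrt{p(z)}$ gives
$$\int_{\|\zeta\|^2<p(z)}|\zeta^\alpha|^2\,dV(\zeta)=\frac{\pi^m\,\alpha!}{(m+|\alpha|)!}\,p(z)^{m+|\alpha|}.$$
Substituting this into $\|f\|^2_{\Omega_{m,p}}$ yields
$$\|f\|^2_{\Omega_{m,p}}=\sum_\alpha\frac{\pi^m\,\alpha!}{(m+|\alpha|)!}\,\|f_\alpha\|^2_{\Omega,\,p^{m+|\alpha|}},$$
which identifies $L^2_a(\Omega_{m,p})$ with a weighted $\ell^2$-direct sum of the spaces $L^2_a(\Omega,p^{m+|\alpha|})$, the weight $p^{m+|\alpha|}$ appearing precisely because the fiber is a ball of radius determined by $p(z)$.

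It then follows that if $\{\phi^{(k)}_j\}_j$ is a complete orthonormal basis of $L^2_a(\Omega,p^{m+k})$, the functions $\Phi_{\alpha,j}(z,\zeta)=\bigl((m+|\alpha|)!/(\pi^m\alpha!)\bigr)^{1/2}\phi^{(|\alpha|)}_j(z)\,\zeta^\alpha$ form a complete orthonormal basis of $L^2_a(\Omega_{m,p})$. Feeding this basis into $K_m=\sum\Phi\overline{\Phi}$, the sum over $j$ reproduces the weighted kernel $K_{\Omega,p^{m+|\alpha|}}(z,z')$, while grouping by total degree $k=|\alpha|$ and applying the multinomial identity $\sum_{|\alpha|=k}\tfrac{1}{\alpha!}\zeta^\alpha\overline{\zeta'^\alpha}=\tfrac{1}{k!}\langle\zeta,\zeta'\rangle^k$ collapses the angular sum. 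This produces $K_m=\sum_{k\ge0}\frac{(m+k)!}{\pi^m k!}K_{\Omega,p^{m+k}}(z,z')\langle\zeta,\zeta'\rangle^k$, and rewriting $(m+k)!/(m!\,k!)=(m+1)_k/k!$ gives exactly the claimed formula.

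The main obstacle is analytic bookkeeping rather than algebra: one must justify the termwise integration and, above all, the \emph{completeness} of $\{\Phi_{\alpha,j}\}$. Concretely, I would verify that the Hartogs expansion converges in the $L^2_a(\Omega_{m,p})$ norm (not merely pointwise or uniformly on compacta), that each coefficient $f_\alpha$ genuinely belongs to $L^2_a(\Omega,p^{m+|\alpha|})$, and that any $f$ orthogonal to every $\Phi_{\alpha,j}$ vanishes. The last point is where care is required, since it relies both on each weighted space being spanned by its chosen basis and on the fact that no holomorphic $L^2$ function on $\Omega_{m,p}$ escapes the fiberwise monomial decomposition; once these are secured, the convergence of the kernel series and the interchange of summation and integration are routine.
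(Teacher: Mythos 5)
The paper itself offers no proof of this statement: it is quoted directly from Ligocka \cite{Ligocka1989} (her Proposition 0), so there is no in-paper argument to compare against; your proof is correct and is essentially the standard derivation (in substance Ligocka's own), via fiberwise monomial orthogonality, the scaled ball integral $\int_{\|\zeta\|^2<p(z)}|\zeta^\alpha|^2\,dV=\pi^m\alpha!\,p(z)^{m+|\alpha|}/(m+|\alpha|)!$, the resulting orthogonal decomposition of $L^2_a(\Omega_{m,p})$ into the weighted spaces $L^2_a(\Omega,p^{m+|\alpha|})$, and the regrouping $\sum_{|\alpha|=k}\zeta^\alpha\overline{\zeta'^\alpha}/\alpha!=\langle\zeta,\zeta'\rangle^k/k!$ combined with $(m+k)!/(m!\,k!)=(m+1)_k/k!$. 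The technical points you flag (norm convergence of the Hartogs expansion, membership of each $f_\alpha$ in the weighted space, completeness of the assembled basis) are exactly the right ones and are routine as you indicate, e.g.\ via torus-averaging $f_\alpha(z)\zeta^\alpha$ out of $f$ and Bessel's inequality.
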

M. Engli\v{s} and G. Zhang generalized this theorem for wider class of domains
in \cite{Englivs2006}.
Since theorem of this type was first proved by F. Forelli and W. Rudin \cite{FR} for
$\Omega$ the unit disk and $p(z)=1-|z|^2$,
some authors call it the Forelli-Rudin construction.
\par
We introduce the polylogarithm function which is necessary to state our main theorem.
The polylogarithm function is defined by
\begin{align}
%$$
 Li_s(z)=\sum_{k=1}^\infty k^{-s}z^k ,
 \end{align}
which converges for $|z|<1$ and any $s\in\Bbb C$.
If $s$ is a negative integer, say $s=-n$, then the polylogarithm function has
the following closed form:
\begin{align*}
  Li_{-n}(z)&=
\dfrac{ z }{(1-z)^{n+1}}
 \sum_{j=0}^{n-1}  A(n,j+1)z^{j}
% }{(1-z)^{n+1}},
\end{align*}
where $A(n,m)$ is the Eulerian number \cite[eq.(2.17)]{d2010}\begin{align*}
 A(n,m)=\sum_{\ell=0}^{m} (-1)^\ell \binom{n+1}{\ell} (m-\ell)^n.
\end{align*}
The first few are
$$\begin{array}{ll}
 Li_{-1}(z)=\dfrac{z}{(1-z)^2}, &  Li_{-2}(z)=\dfrac{z^2 + z}{(1-z)^3}, \\
 Li_{-3}(z)=\dfrac{z^3 + 4z^2+ z}{(1-z)^4}, &  Li_{-4}(z)=\dfrac{z^4 +11z^3 +11z^2 +z}{(1-z)^5}.
\end{array}$$ %\right)
%\begin{align*}
% Li_{-1}(z)&=\dfrac{z}{(1-z)^2},%\\
% Li_{-2}(z)=\dfrac{z^2 + z}{(1-z)^3},\\
% Li_{-3}(z)&=\dfrac{z^3 + 4z^2+ z}{(1-z)^4},%\\
% Li_{-4}(z)=\dfrac{z^4 +11z^3 +11z^2 +z}{(1-z)^5}.
%\end{align*}
The polynomial $A_n(z) = \sum_{j=0}^{n-1}  A(n,j+1)z^{j}$ is called the Eulerian
polynomial.
More information about the polylogarithm function and the Eulerian polynomial can be found in \cite{Carlitz},\cite{d2010} and \cite{Hirzebruch2008}.

\section{The Bergman kernel of $D_{n,m}$}

Let $\mu>0$. Define $D_{n,m}$ by
$$D_{n,m}:=  \{ (z,\zeta)\in\Bbb{C}^n\times\Bbb{C}^m; ||\zeta||^2 <
e^{-\mu ||z||^2}\}.$$
This section is devoted to the study of the Bergman 
kernel of $D_{n,m}$.
We shall begin with the Fock-Bargmann space and its reproducing
kernel.\par
The Fock-Bargmann space $L_a^2(\Bbb{C}^n,e^{-\mu||z||^2})$ is
 the Hilbert space of square integrable entire functions on $\Bbb{C}^n$ with the
inner product
$$\langle f,g\rangle =\int_{\Bbb{C}^n }f(z)\overline{g(z)} e^{-\mu||z||^2} dz .$$
The reproducing kernel $K_{n,\mu}$ of $L_a^2(\Bbb{C}^n,e^{-\mu|| z||^2})$ is expressed explicitly as
\begin{align}
 K_{n,\mu}(z,w)=\dfrac{ \mu^n e^{\mu \langle z,w\rangle}}{\pi^n}.
\end{align}
The kernel function $K_{n,\mu}$ is called the Fock-Bargmann kernel (see
\cite{bargmann}).
We are now ready to state our main result.
\begin{theo}
The Bergman kernel of $D_{n,m}$ is given by
\begin{align}
 K_{D_{n,m}}((z,\zeta),(z',\zeta') ) &=
\dfrac{\mu^n}{\pi^{n+m}}
e^{m\mu \langle z,z'\rangle }\dfrac{d^m}{d t^m} Li_{-n}(t)\lvert_{t=e^{\mu\langle 
z,z'\rangle}\langle \zeta ,\zeta'  \rangle  }\\
&=\dfrac{\mu^n}{\pi^{n+m}} \dfrac{d^{m-1}}{d t^{m-1}} 
\dfrac{Li_{-(n+1)} (e^{\mu\langle z,z'\rangle }t)  }{t}
\lvert_{t=\langle \zeta ,\zeta'  \rangle  }.
\end{align}
\end{theo}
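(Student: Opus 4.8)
The plan is to apply Ligocka's theorem (Theorem 1) directly, using the Fock-Bargmann kernel as the weighted Bergman kernel of the base domain. The key observation is that here $\Omega = \mathbb{C}^n$ and $p(z) = e^{-\mu\|z\|^2}$, so the weight $p^{k+m}$ appearing in Theorem 1 is exactly $e^{-(k+m)\mu\|z\|^2}$. This means $K_{\Omega,p^{k+m}}$ is precisely the Fock-Bargmann kernel $K_{n,(k+m)\mu}$, which by formula (2) equals $\frac{((k+m)\mu)^n}{\pi^n}e^{(k+m)\mu\langle z,z'\rangle}$.

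\textbf{The main computation.} Substituting this into the formula of Theorem 1, I would obtain
\begin{align*}
K_{D_{n,m}}((z,\zeta),(z',\zeta')) = \frac{m!}{\pi^m}\sum_{k=0}^\infty \frac{(m+1)_k}{k!}\cdot\frac{((k+m)\mu)^n}{\pi^n}e^{(k+m)\mu\langle z,z'\rangle}\langle\zeta,\zeta'\rangle^k.
\end{align*}
Factoring out $\mu^n/\pi^{n+m}$ and the term $e^{m\mu\langle z,z'\rangle}$, and writing $t = e^{\mu\langle z,z'\rangle}\langle\zeta,\zeta'\rangle$, the sum becomes a power series in $t$ whose $k$-th coefficient is $\frac{m!(m+1)_k}{k!}(k+m)^n$. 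The task is then to identify this series with $\frac{d^m}{dt^m}Li_{-n}(t)$.

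\textbf{The key identity.} The heart of the proof is recognizing that differentiating the polylogarithm series $Li_{-n}(t) = \sum_{j=1}^\infty j^n t^j$ term by term $m$ times produces exactly the coefficients above. Indeed, the $m$-th derivative of $t^j$ is $j(j-1)\cdots(j-m+1)t^{j-m} = \frac{j!}{(j-m)!}t^{j-m}$, so after reindexing with $j = k+m$ one finds $\frac{d^m}{dt^m}Li_{-n}(t) = \sum_{k=0}^\infty (k+m)^n\frac{(k+m)!}{k!}t^k$. Matching this against the series above reduces to the elementary Pochhammer identity $m!\,(m+1)_k = (k+m)!/ k! \cdot k! = \frac{(k+m)!}{1}$; more precisely $\frac{m!(m+1)_k}{k!} = \frac{(k+m)!}{k!\,k!}\cdot k!$, which I would verify directly from $(m+1)_k = \frac{(k+m)!}{m!}$. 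This gives the first displayed formula (3). The second formula (4) then follows by rewriting: since $Li_{-n}(t) = t\cdot\frac{d}{dt}\,\text{(something)}$ or more directly via the relation $t\frac{d}{dt}Li_{-(n+1)}(t) = Li_{-n}(t)$, one can lower the order of differentiation by one at the cost of dividing by $t$, yielding the form involving $Li_{-(n+1)}$.

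\textbf{The anticipated obstacle.} The genuinely delicate point is not the formal manipulation but the \emph{justification of term-by-term differentiation and the interchange of summation}, together with verifying convergence on the correct domain. Since $|t| = e^{\mu\langle z,z'\rangle}\langle\zeta,\zeta'\rangle$ on the diagonal satisfies $|t|<1$ precisely on $D_{n,m}$ (by the defining inequality $\|\zeta\|^2 < e^{-\mu\|z\|^2}$), the polylogarithm series converges there, but I would need to confirm that the convergence is uniform on compact subsets so that the derivative $\frac{d^m}{dt^m}Li_{-n}$ is legitimately computed term by term. The second subtle point is the bookkeeping in the Pochhammer/factorial identity, where an off-by-one error in reindexing would spoil the match with formula (3); I expect this to require careful attention but no deep idea.
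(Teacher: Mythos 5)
Your proposal follows the paper's proof almost step for step: Ligocka's theorem with $\Omega=\mathbb{C}^n$ and $p(z)=e^{-\mu\|z\|^2}$, the Fock--Bargmann kernel $K_{n,(k+m)\mu}$ as the weighted Bergman kernel of the base, and identification of the resulting power series with $\frac{d^m}{dt^m}Li_{-n}(t)$. Your factorial bookkeeping, though clumsily written, amounts to the same identity the paper uses in Pochhammer form, namely $m!\,(m+1)_k/k!=(k+m)!/k!=(k+1)_m$, so that part is sound. However, there is a genuine gap in your convergence discussion: you verify $|t|<1$ only \emph{on the diagonal}, whereas the theorem asserts the formula for arbitrary pairs $((z,\zeta),(z',\zeta'))\in D_{n,m}\times D_{n,m}$, so you need $|e^{\mu\langle z,z'\rangle}\langle\zeta,\zeta'\rangle|<1$ off the diagonal as well. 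The paper proves exactly this (its inequality (5)) by Cauchy--Schwarz, $|\langle\zeta,\zeta'\rangle|^2\le\|\zeta\|^2\,\|\zeta'\|^2<e^{-\mu(\|z\|^2+\|z'\|^2)}$, combined with the elementary bound $\|z\|^2+\|z'\|^2\ge 2\,\mathrm{Re}\,\langle z,z'\rangle$. Without this, the identification of the Ligocka series with the polylogarithm derivative is unjustified precisely where the theorem claims it; by contrast, the term-by-term differentiation that you single out as the delicate point is routine for a power series inside its disk of convergence (the paper simply records it as the series representation $\sum_{k=0}^\infty (k+1)_m(k+m)^{-s}z^k$ valid for $|z|<1$).

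A second, smaller defect: the identity you invoke for passing to formula (4), $t\frac{d}{dt}Li_{-(n+1)}(t)=Li_{-n}(t)$, is stated backwards. The correct relation, which the paper cites, is $\frac{d}{dt}Li_s(t)=Li_{s-1}(t)/t$, i.e.\ $t\frac{d}{dt}Li_{-n}(t)=Li_{-(n+1)}(t)$; as written, your version would lower the index from $-n$ to $-(n-1)$ rather than reach $Li_{-(n+1)}$. Your verbal description (trading one order of differentiation for a division by $t$) matches the correct relation, so this is a slip rather than a wrong idea, but it would derail the computation if executed literally. Note also that going from (3) to (4) involves changing the differentiation variable from $t=e^{\mu\langle z,z'\rangle}\langle\zeta,\zeta'\rangle$ to $t=\langle\zeta,\zeta'\rangle$, and the prefactor $e^{m\mu\langle z,z'\rangle}$ is absorbed by the chain rule in that change; this bookkeeping is worth one explicit line, although the paper also leaves it implicit.
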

\begin{proof}
By Ligocka's theorem and the formula $(2)$, we have
\begin{align*}
 K_{D_{n,m}}((z,\zeta),(z',\zeta') )
& =\dfrac{ m! }{ \pi^{m} } \sum_{k=0}^\infty
 \dfrac{(m+1)_k}{k!} \dfrac{(k+m)^n\mu^n}{\pi^n}  e^{\mu(k+m)\langle z,z' 
\rangle}
\langle \zeta,\zeta'\rangle^k\\  
&=\dfrac{ m!\mu^n }{ \pi^{n+m} } e^{\mu m\langle z,z' \rangle}
\sum_{k=0}^\infty \dfrac{(m+1)_k}{k!} (k+m)^n
  e^{\mu k\langle z,z' \rangle}\langle \zeta,\zeta'\rangle^k.
\end{align*}
Using a simple identity $(m+1)_k/k!=(k+1)_m/m!$,  we get
\begin{align*}
 K_{D_{n,m}}((z,\zeta),(z',\zeta') )
& =\dfrac{\mu^n}{ \pi^{n+m} } e^{\mu m\langle z,z' \rangle} 
\sum_{k=0}^\infty
 (k+1)_m (k+m)^n  e^{\mu k\langle z,z' \rangle}
\langle \zeta,\zeta'\rangle^k.%\\  
\end{align*}
Here we remark that
\begin{align}
|e^{\mu \langle z,z'\rangle} \langle\zeta,\zeta'\rangle |<1
\end{align}
for all $(z,\zeta),(z',\zeta') \in D_{n,m}$.
Indeed, %it is shown as follows.
from the definition of $D_{n,m}$ and the Cauchy-Schwartz inequality, we see that
$ |\left\langle \zeta,\zeta'  \right\rangle  |^2  \leq ||\zeta||^2||
\zeta'||^2<
e^{-\mu (||z||^2 +||z'||^2)},  $
for any $(z,\zeta),(z',\zeta') \in D_{n,m}$.
Combining this and a simple inequality $||z||^2+||z'||^2 \geq
2\mbox{Re}\left\langle z,z'  \right\rangle $,
we have $|\left\langle \zeta,\zeta' \right\rangle  |^2 < |e^{-\mu\left\langle
z,z'  \right\rangle  }|^2$. Hence $|e^{\mu \langle z,z'\rangle}
\langle\zeta,\zeta'\rangle |<1$. \par
Let us evaluate the series
%$$
\begin{align}
H_{m,n}((z,\zeta),(z',\zeta'))= \sum_{k=0}^\infty
 (k+1)_m (k+m)^n  e^{\mu k\langle z,z' \rangle}
\langle \zeta,\zeta'\rangle^k.
\end{align}
It is easy to see from (1) that the $m$-th derivative of the polylogarithm function
has the following series representation:
%$$
\begin{align}
\dfrac{d^m Li_s (z) }{d z^m} &=\sum_{k=m}^\infty (k-m+1)_m k^{-s} z^{k-m}\\
&= \sum_{k=0}^\infty (k+1)_m
(k+m)^{-s}z^k, 
\end{align}
for $|z|<1$.
Comparing (6) and (8), we obtain
$$H_{m,n}((z,\zeta),(z',\zeta'))=  \dfrac{d^m}{d t^m} 
Li_{-n}(t)
\lvert_{t=e^{\mu\langle z,z'\rangle}\langle \zeta ,\zeta'  \rangle  }.$$
This proves the formula $(3)$. The formula $(4)$ follows from $(3)$ 
and a well-known property of the polylogarithm function \cite[eq. 2.1]{d2010}:
$$ \dfrac{d}{d t} Li_s (t)=\dfrac{Li_{s-1}(t)}{t} .$$
We have just completed the proof of Theorem 2.
\end{proof}
\begin{rem}
There is a following closed form of the $m$-th derivative of the polylogarithm
function: 
\begin{align}
\dfrac{d^m Li_{-n} (t) }{d t^m} = \frac{m!\sum_{j=0}^{n} (-1)^{n+j}  (m+1)_j
S(1+n,1+j)(1-t)^{n-j}   }{(1-t)^{n+m+1}}  ,
\end{align}
where $S(\cdot,\cdot)$ denotes the Stirling number of the second kind (see \cite{d2010}).\par
\end{rem}

\section{ An application}

As an application of Theorem 2 we solve the Lu Qi-Keng problem for $D_{n,m}$ in some cases.
The Lu Qi-Keng problem asks whether the Bergman kernel has zeros or not.
He posed this problem in connection with
the global well-definedness of the representative coordinates (see \cite{Boas2000}).
Lu Qi-Keng's recent result \cite{Lu2009} implies that the zero of the Bergman
kernel has a geometric interpretation.\par
We begin with the following lemma which
together with the inequality (5) tells us that
the image of the
map $D_{n,m}\times D_{n,m}\ni ((z,\zeta),(z',\zeta')) \mapsto e^{\mu \langle
z,z'  \rangle} \langle \zeta,\zeta' \rangle   \in \mathbb{C}  $ is the unit disk.

\begin{lem}
 For any $\alpha\in\mathbb{C}$ such that $|\alpha|<1$, there exist $(z,\zeta) ,(z',\zeta')\in D_{n,m}$
such that
 $\alpha=e^{\mu\langle z,z' \rangle}\langle \zeta,\zeta'\rangle$.
\end{lem}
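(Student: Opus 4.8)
The plan is to reduce the problem to the fiber over the origin of the base factor and then solve an elementary problem inside the unit ball of $\mathbb{C}^m$. The guiding observation is that the exponential factor $e^{\mu\langle z,z'\rangle}$ can be normalized away by choosing $z=z'=0$. With this choice one has $e^{\mu\langle z,z'\rangle}=1$, and the membership condition $(0,\zeta)\in D_{n,m}$ collapses to $\|\zeta\|^2<e^{0}=1$, i.e.\ $\zeta$ lies in the open unit ball of $\mathbb{C}^m$; likewise for $\zeta'$. Hence it suffices to produce $\zeta,\zeta'$ in the open unit ball of $\mathbb{C}^m$ with $\langle\zeta,\zeta'\rangle=\alpha$, and the desired identity $\alpha=e^{\mu\langle z,z'\rangle}\langle\zeta,\zeta'\rangle$ follows automatically.

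To exhibit such $\zeta,\zeta'$ I would write $\alpha=|\alpha|e^{i\theta}$ and set $\zeta=(\sqrt{|\alpha|}\,e^{i\theta},0,\dots,0)$ and $\zeta'=(\sqrt{|\alpha|},0,\dots,0)$. Recalling the convention $\langle\zeta,\zeta'\rangle=\sum_j\zeta_j\overline{\zeta'_j}$, this gives $\langle\zeta,\zeta'\rangle=\sqrt{|\alpha|}\,e^{i\theta}\cdot\sqrt{|\alpha|}=|\alpha|e^{i\theta}=\alpha$. Since $|\alpha|<1$ we have $\|\zeta\|=\|\zeta'\|=\sqrt{|\alpha|}<1$, so indeed $(0,\zeta),(0,\zeta')\in D_{n,m}$. (The degenerate value $\alpha=0$ is covered by the same formula, which then yields $\zeta=\zeta'=0$.)

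There is essentially no real obstacle once the exponential factor is trivialized: the statement becomes the elementary fact that the Hermitian inner product restricted to the open unit ball surjects onto the open unit disk, the bound $|\langle\zeta,\zeta'\rangle|\le\|\zeta\|\,\|\zeta'\|<1$ from Cauchy--Schwarz being exactly the constraint that makes the disk the largest possible target. Combined with the inequality $(5)$, which already confines the image of the map inside the unit disk, this surjectivity identifies the image of $((z,\zeta),(z',\zeta'))\mapsto e^{\mu\langle z,z'\rangle}\langle\zeta,\zeta'\rangle$ precisely as the open unit disk, as asserted in the remark preceding the lemma.
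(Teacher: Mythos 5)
Your proof is correct and follows essentially the same approach as the paper's: a direct construction in which $\zeta,\zeta'$ are chosen proportional to each other, scaled to realize $|\alpha|$ and rotated by $e^{i\theta}$ to realize the argument. The only difference is cosmetic --- you specialize to the fiber $z=z'=0$ where $e^{\mu\langle z,z'\rangle}=1$, while the paper keeps an arbitrary $z$ and compensates with $\|\zeta\|^2=re^{-\mu\|z\|^2}$, but the underlying idea is identical.
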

\begin{proof}
Let $\alpha=re^{i\theta}, r<1.$
For any fixed $z\in\mathbb{C}^n$, we can choose
$\zeta\in\mathbb{C}^m$ such that
$|| \zeta||^2=re^{-\mu ||z||^2} $.
Then $(z,\zeta),(z, e^{-i\theta}\zeta) \in
D_{n,m}$ and
 $e^{\mu\left\langle z,z  \right\rangle  }\left\langle \zeta,e^{-i \theta}\zeta
\right\rangle =r e^{i\theta}$.
\end{proof}
We next discuss the location of zeros of the polylogarithm function and its
derivative.
\begin{lem}
The function $Li_{-n}(z)/z$ has a zero $z_0$ such that $|z_0|<1$
for all $n \geq 3$.
\end{lem}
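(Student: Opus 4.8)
The plan is to reduce the problem about $Li_{-n}(z)/z$ to a statement about the zeros of the Eulerian polynomial and then to locate one of those zeros inside the unit disk by an elementary estimate on the coefficients. First I would use the closed form recalled in the preliminaries: writing $A_n(z)=\sum_{j=0}^{n-1}A(n,j+1)z^j$ for the Eulerian polynomial, the formula for $Li_{-n}$ gives
$$\frac{Li_{-n}(z)}{z}=\frac{A_n(z)}{(1-z)^{n+1}}.$$
Since the factor $(1-z)^{-(n+1)}$ is holomorphic and nonvanishing on the unit disk, a point $z_0$ with $|z_0|<1$ is a zero of $Li_{-n}(z)/z$ if and only if it is a zero of $A_n$. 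Thus it suffices to produce a root of the Eulerian polynomial inside the unit disk.

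Next I would record two structural facts about $A_n$. It has degree $n-1$, and its coefficients are palindromic: the symmetry $A(n,k)=A(n,n+1-k)$ of the Eulerian numbers makes the coefficient list read the same forwards and backwards, with $A(n,1)=A(n,n)=1$ as the constant and leading terms. Consequently the relation $z^{n-1}A_n(1/z)=A_n(z)$ holds, so the roots come in reciprocal pairs and the product of the $n-1$ roots has modulus $1$. It is therefore enough to show that not all roots lie on the circle $|z|=1$: if some root has modulus different from $1$, then because the product of the moduli is $1$, at least one root must have modulus strictly less than $1$.

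The crux is to rule out the possibility that every root of $A_n$ is unimodular. For this I would compute the coefficient of $z^{n-2}$, namely $A(n,n-1)=A(n,2)=2^n-n-1$, so that the sum of the roots equals $-(2^n-n-1)$. If all $n-1$ roots lay on the unit circle, their sum would have modulus at most $n-1$; but $2^n-n-1>n-1$ precisely when $2^n>2n$, which holds for every $n\ge 3$. This contradiction forces at least one root off the unit circle, and hence, by the product-of-moduli normalization above, strictly inside it, which completes the argument.

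I expect the main obstacle to be precisely this last step: the reciprocal-pair structure alone does not suffice, since the roots could a priori all be unimodular, so one genuinely needs a size estimate, and the clean one is the sum-of-roots bound driven by the identity $A(n,2)=2^n-n-1$. An alternative, if one is willing to invoke the classical theorem that every Eulerian polynomial has only real, negative, simple zeros, is to note that the $n-1\ge 2$ distinct negative roots cannot all equal $-1$, so some root differs from $-1$ and the reciprocal pairing then places one of the two inside the disk; but I would prefer the self-contained coefficient estimate.
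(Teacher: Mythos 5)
Your proof is correct, and it takes a genuinely different route from the paper's. Both arguments reduce the lemma, via the closed form $Li_{-n}(z)/z=A_n(z)/(1-z)^{n+1}$, to producing a root of the Eulerian polynomial inside the unit disk, and both exploit the reciprocal symmetry of its roots: your identity $z^{n-1}A_n(1/z)=A_n(z)$ is exactly the polynomial avatar of the inversion formula $Li_{-n}(1/z)=(-1)^{n+1}Li_{-n}(z)$ that the paper invokes. Where you diverge is the crucial step of ruling out that all roots are unimodular. The paper quotes the classical theorem (Comtet) that the Eulerian polynomial has only negative, simple real roots; since for $n\geq 3$ its $n-1\geq 2$ distinct negative roots cannot all equal $-1$, some root $\alpha$ has $|\alpha|\neq 1$, and the inversion formula flips it into the disk if $|\alpha|>1$. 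You instead use a Vieta estimate: since $A(n,n)=1$, the sum of the roots is $-A(n,n-1)=-(2^n-n-1)$ by the symmetry $A(n,n-1)=A(n,2)$ and the evaluation $A(n,2)=2^n-n-1$ (which follows directly from the explicit formula for Eulerian numbers given in the paper), and $2^n-n-1>n-1$ for all $n\geq 3$ exceeds the largest possible modulus of a sum of $n-1$ unimodular numbers. Your argument is more self-contained, avoiding the real-rootedness theorem entirely, and it degenerates correctly at $n=2$, where $A_2(z)=1+z$ has its only root at $-1$ and the lemma indeed fails ($2^2-2-1=1=n-1$, so no contradiction arises); the paper's proof is shorter but leans on a deeper classical input. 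The alternative you sketch at the end---distinct negative roots cannot all be $-1$, then use the reciprocal pairing---is essentially the paper's proof verbatim.
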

     \begin{proof}
      It is well-known that the Eulerian polynomial has only negative real, simple roots (see \cite[p. 292, Exercise 3]{comtet}).
Since $n\geq3$, there exists a root $\alpha$ such that $|\alpha|\neq1$.
If $|\alpha|<1$, then $\alpha$ is a desired zero. Now we assume that
$|\alpha|>1$.
Then the following formula \cite[eq.(2.2)]{d2010}
$$ Li_{-n}\left( \dfrac{1}{z} \right) =(-1)^{n+1}Li_{-n}(z)  \quad(n\in
\mathbb{N}),$$
implies that $\alpha^{-1}$ is a desired zero.
     \end{proof}
Further information of the location of zeros of $Li_{-n}(z)/z$ is found in \cite{Peyerimhoff1966}.
This short proof was obtained by private communication with Prof. Ochiai and Dr. Shiomi (compare with the proof in \cite{Peyerimhoff1966}).\par
The following is immediate from a straightforward computation.
\begin{lem}
For any $m\in\Bbb{N}$, the $(m-1)$-th derivative of $Li_{-2}(t)/t$ is expressed
as
 $$\dfrac{d^{m-1}}{dt^{m-1}} \dfrac{Li_{-2}(t)}{t}=\dfrac{(m+1)!(t+m)}{(1-t)^{m+2}}. $$ 
\end{lem}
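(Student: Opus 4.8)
The plan is to strip the polylogarithm off the problem entirely and reduce to an elementary one-variable differentiation. First I would substitute the closed form recorded in the preliminaries, $Li_{-2}(t)=\dfrac{t^2+t}{(1-t)^3}$, which gives $\dfrac{Li_{-2}(t)}{t}=\dfrac{1+t}{(1-t)^3}$. The assertion then becomes the purely rational claim that $\dfrac{d^{m-1}}{dt^{m-1}}\dfrac{1+t}{(1-t)^3}$ is a constant multiple of $\dfrac{t+m}{(1-t)^{m+2}}$; in particular the only finite zero sits at $t=-m$ and the only pole at $t=1$, which is exactly the structural information needed for the Lu Qi-Keng discussion.

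To establish the rational identity I would induct on $m$. The base case $m=1$ is the closed form itself. For the inductive step, assuming $\dfrac{d^{m-1}}{dt^{m-1}}\dfrac{1+t}{(1-t)^3}=\dfrac{c_m\,(t+m)}{(1-t)^{m+2}}$, I would differentiate once more and put the result over the common denominator $(1-t)^{m+3}$. The one computation that matters is the algebraic simplification of the numerator, $(1-t)+(m+2)(t+m)=(m+1)(t+m+1)$, which simultaneously restores the linear-over-power shape and extracts the factor $(m+1)$. Reading off the recursion $c_{m+1}=(m+1)\,c_m$ and feeding in the base value then determines the constant explicitly.

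An equivalent and even shorter route bypasses the induction. The lowering identity $\dfrac{d}{dt}Li_s(t)=\dfrac{Li_{s-1}(t)}{t}$ used in the proof of Theorem 2 shows $\dfrac{Li_{-2}(t)}{t}=\dfrac{d}{dt}Li_{-1}(t)$, so the quantity in question is simply $\dfrac{d^m}{dt^m}Li_{-1}(t)$, which I can read directly off the closed form in Remark 1 with $n=1$. There the sum collapses to the two terms indexed by $j=0,1$, with $S(2,1)=S(2,2)=1$, and these combine to a constant times $t+m$. Either way no analytic input beyond the closed forms already in hand is required; I expect the only obstacle to be the bookkeeping of the multiplicative constant and the factorization of the numerator, both of which are routine.
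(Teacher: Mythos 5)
Your method is sound, and since the paper offers no argument at all for this lemma (it is dismissed with ``immediate from a straightforward computation''), an induction like yours is exactly the proof that ought to be on record. But there is a concrete problem you never confront: carried out faithfully, both of your routes yield the constant $m!$, whereas the lemma as printed asserts $(m+1)!$ --- and the printed statement is in fact false. Your own base case already shows this: for $m=1$ the left-hand side is $Li_{-2}(t)/t=(1+t)/(1-t)^3$, while the printed right-hand side is $2!\,(t+1)/(1-t)^3$, twice as large. Your (correct) factorization $(1-t)+(m+2)(t+m)=(m+1)(t+m+1)$ gives the recursion $c_{m+1}=(m+1)c_m$ with $c_1=1$, hence $c_m=m!$, not $(m+1)!$. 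Your second route says the same thing: with $n=1$ the two surviving terms of Remark 1 give numerator $m!\bigl[(m+1)-(1-t)\bigr]=m!\,(t+m)$, so
$$\dfrac{d^{m}}{dt^{m}}\,Li_{-1}(t)=\dfrac{m!\,(t+m)}{(1-t)^{m+2}},$$
which is exactly your quantity, since the lowering identity gives $\frac{d}{dt}Li_{-1}(t)=Li_{-2}(t)/t$. By asserting that the base case ``is the closed form itself'' and deferring the constant to ``bookkeeping,'' you papered over precisely the point where your argument and the stated formula part ways: a proof of the statement as printed cannot close, because its constant is off by the factor $m+1$ (evidently a misprint for $m!$).

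So the honest conclusion of your argument is the corrected identity $\frac{d^{m-1}}{dt^{m-1}}\bigl(Li_{-2}(t)/t\bigr)=m!\,(t+m)/(1-t)^{m+2}$, and your write-up should state this and flag the discrepancy rather than claim the printed formula. You are right that the error is harmless downstream: the only content of the lemma used in Theorem 3 is that $t=-m$ is the unique finite zero (with the unique pole at $t=1$), and since $|{-m}|\geq 1$ while, by Lemma 1 and inequality (5), the kernel is evaluated only at points of the open unit disk, the zero-free conclusion for $n=1$ is insensitive to the multiplicative constant. Recording the corrected constant is the only repair your proposal needs --- but noticing that it is needed is the whole point of doing the bookkeeping you postponed.
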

From this lemma, we see that $t=-m$ is the zero of
$\frac{d^{m-1}}{dt^{m-1}} \frac{Li_{-2}(t)}{t}$.

Summarizing, we get:
\begin{theo}
The Bergman kernel $K_{D_{n,m}}$ is zero-free if
$n=1$ and $m\geq1$.
If $m=1$ and $n\geq 2$ then $K_{D_{n,m}}$ has a zero.
\begin{rem}
For our domain $D_{n,m}$, the solution of the Lu Qi-Keng problem depends only on
the value $(m,n)$.
In general, the solutions of the Lu Qi-Keng problem for the Hartogs domains
$ \{ (z,\zeta)\in\Omega\times\Bbb{C}^m; ||\zeta||^2 <p(z)^\mu\}$
depend not only on  $(m,n)$ but also on $\mu$ (cf. \cite{Demmand2009}).
\end{rem}

\end{theo}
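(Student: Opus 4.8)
The plan is to reduce the vanishing of $K_{D_{n,m}}$ to a one-variable zero-location problem and then read off the two cases from the lemmas already proved. By formula $(3)$ of Theorem 2,
$$ K_{D_{n,m}}((z,\zeta),(z',\zeta')) = \frac{\mu^n}{\pi^{n+m}}\, e^{m\mu\langle z,z'\rangle}\, \frac{d^m}{dt^m}Li_{-n}(t)\Big|_{t=e^{\mu\langle z,z'\rangle}\langle\zeta,\zeta'\rangle}, $$
and the factors $\mu^n/\pi^{n+m}$ and $e^{m\mu\langle z,z'\rangle}$ never vanish. Hence $K_{D_{n,m}}$ vanishes at a point precisely when $\frac{d^m}{dt^m}Li_{-n}(t)$ vanishes at the corresponding value $t=e^{\mu\langle z,z'\rangle}\langle\zeta,\zeta'\rangle$. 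By inequality $(5)$ this value always lies in the open unit disk, and by Lemma 1 every point of the open unit disk occurs. I would therefore state, as the backbone of both cases, that $K_{D_{n,m}}$ has a zero if and only if $\frac{d^m}{dt^m}Li_{-n}(t)$ has a zero in the open unit disk.

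For $n=1$ and $m\geq 1$ I would compute $\frac{d^m}{dt^m}Li_{-1}(t)$ explicitly. Using the differentiation property $\frac{d}{dt}Li_{-1}(t)=Li_{-2}(t)/t$, Lemma 3 yields $\frac{d^m}{dt^m}Li_{-1}(t)=\frac{d^{m-1}}{dt^{m-1}}\frac{Li_{-2}(t)}{t}$, which is a nonzero constant times $(t+m)/(1-t)^{m+2}$; the same expression also follows from the closed form $(9)$ of Remark 1 taken at $n=1$. Its only zero is $t=-m$, and since $m\geq 1$ we have $|-m|\geq 1$, so this zero lies outside the open unit disk. By the reduction, $K_{D_{1,m}}$ is zero-free.

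For $m=1$ and $n\geq 2$ I would use formula $(4)$ with $m=1$, giving $K_{D_{n,1}}$ as a nonvanishing multiple of $Li_{-(n+1)}(\alpha)/\langle\zeta,\zeta'\rangle$ with $\alpha=e^{\mu\langle z,z'\rangle}\langle\zeta,\zeta'\rangle$ (equivalently $(3)$ with $m=1$ together with $\frac{d}{dt}Li_{-n}(t)=Li_{-(n+1)}(t)/t$). Since $n+1\geq 3$, Lemma 2 applied with index $n+1$ produces a zero $z_0$ of $Li_{-(n+1)}(z)/z$ with $|z_0|<1$; note $z_0\neq 0$, so $Li_{-(n+1)}(z_0)=0$. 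By Lemma 1 there are points of $D_{n,1}$ at which $\alpha=z_0$, and there $\langle\zeta,\zeta'\rangle\neq 0$ while the numerator vanishes, so $K_{D_{n,1}}=0$. Thus $K_{D_{n,1}}$ has a zero.

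The step carrying the real content is the reduction in the first paragraph: it converts a statement about the Bergman kernel of a domain in $\mathbb{C}^{n+m}$ into a one-variable problem, and it rests on Lemma 1 (the image of $((z,\zeta),(z',\zeta'))\mapsto e^{\mu\langle z,z'\rangle}\langle\zeta,\zeta'\rangle$ being the full unit disk) together with the non-vanishing of the exponential prefactor. Once this is in place the two cases follow immediately from Lemma 3 (or Remark 1) and Lemma 2, so I anticipate no further obstacle.
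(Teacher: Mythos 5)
Your proof is correct and follows essentially the same route as the paper: the paper's ``Summarizing, we get'' step is precisely your reduction via formula $(3)$ (equivalently $(4)$), inequality $(5)$, and Lemma 1, with the two cases settled by Lemma 3 (zero at $t=-m$, outside the open disk) and Lemma 2 applied with index $n+1\geq 3$. You have merely made explicit what the paper leaves implicit, including the minor points ($z_0\neq 0$, and that only the \emph{location} $t=-m$ of the zero in Lemma 3 matters, not the constant factor), so there is nothing to correct.
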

\section*{Acknowledgements}%\dawuhao\.
The author would like to express sincerest gratitude to Professors Hideyuki Ishi and
Hiroyuki Ochiai and Dr. Daisuke Shiomi for their helpful advices and
discussions.
The author also acknowledges the encouragement and helpful comments on this paper of Professor Takeo Ohsawa.
%The author is grateful to the anonymous referees for their helpful
%comments and suggestions.

\bibliographystyle{plain}

\end{document}